\numberwithin{equation}{section}
\DeclareMathOperator*{\esssup}{ess\,sup}
\DeclareMathOperator*{\essinf}{ess\,inf}
\newtheorem{theorem}{Theorem}[section]
\newtheorem{corollary}{Corollary}[section]
\newtheorem{lemma}{Lemma}[section]
\newtheorem{definition}{Definition}[section]
\newtheorem{remark}{Remark}[section]
\newtheorem{example}{Example}[section]
\theoremstyle{definition}
\theoremstyle{remark}
\newcommand{\leref}{Lemma~\ref}
\newcommand{\exref}{Example~\ref}
\newcommand{\thref}{Theorem~\ref}
\newcommand{\Rho}{{\pmb\rho}}
\newcommand{\Tau}{{\pmb\tau}}
\newcommand{\E}{\mathbb{E}}
\newcommand{\T}{\mathcal{T}}
\newcommand{\bT}{\mathbb{T}}
\newcommand{\eps}{\epsilon}
\title[]{On Zero-sum Optimal Stopping Games}
\author[]{Erhan Bayraktar} \thanks{This research was supported in part by the National Science Foundation under grant DMS-1613170.}  
\address{Department of Mathematics, University of Michigan}
\email{erhan@umich.edu}
\author[]{Zhou Zhou}
\address{IMA, University of Minnesota}
\email{zhoux528@umn.edu}
\date{\today}
\keywords{}
\begin{document}
\maketitle
\begin{abstract}
On a filtered probability space $(\Omega,\mathcal{F},P,\mathbb{F}=(\mathcal{F}_t)_{t=0,\dotso,T})$, we consider stopping games $\overline V:=\inf_{\Rho\in\bT^{ii}}\sup_{\tau\in\T}\E[U(\Rho(\tau),\tau)]$ and $\underline V:=\sup_{\Tau\in\bT^i}\inf_{\rho\in\T}\E[U(\rho,\Tau(\rho))]$ in discrete time, where $U(s,t)$ is $\mathcal{F}_{s\vee t}$-measurable instead of $\mathcal{F}_{s\wedge t}$-measurable as is assumed in the literature on Dynkin games, $\T$ is the set of stopping times, and $\bT^i$ and $\bT^{ii}$ are sets of mappings from $\T$ to $\T$ satisfying certain non-anticipativity conditions. We will see in an example that there is no room for stopping strategies in classical Dynkin games unlike the new stopping game we are introducing.
We convert the problems into an alternative Dynkin game, and show that $\overline V=\underline V=V$, where $V$ is the value of the Dynkin game. We also get optimal $\Rho\in\bT^{ii}$ and $\Tau\in\bT^i$ for $\overline V$ and $\underline V$ respectively.
\end{abstract}
\section{Introduction}
 On a filtered probability space $(\Omega,\mathcal{F},P,\mathbb{F}=(\mathcal{F}_t)_{t=0,\dotso,T})$,
let us consider the game
\begin{equation}\label{1}
\inf_\rho\sup_\tau\E U(\rho,\tau)\quad\text{and}\quad\sup_\tau\inf_\rho\E U(\rho,\tau),
\end{equation}
where $U(s,t)$ is $\mathcal{F}_{s \vee t}$-measurable.

First, we consider the case in which $\rho$ and $\tau$ in the above are stopping times. That is, let
\begin{equation}\label{eq1}
\overline A:=\inf_{\rho\in\T}\sup_{\tau\in\T}\E U(\rho,\tau)\quad\text{and}\quad\underline A:=\sup_{\tau\in\T}\inf_{\rho\in\T}\E U(\rho,\tau),
\end{equation}
where $\T$ is the set of $\mathbb{F}$-stopping times taking values in $\{0,\dotso,T\}$.
In the particular case when
\begin{equation}\label{eq2}
U(s,t)=f_s 1_{\{s<t\}}+g_t 1_{\{s\geq t\}}
\end{equation}
in which $f_t$ and $g_t$ are bounded $\mathbb{F}$-adapted processes,
the problem above is said to be a Dynkin game (see for example \cite{MR0241121} and \cite[Chapter VI-6]{Neveu}). It is well-known that if $f\geq g$ then $\overline A=\underline A$.\footnote{For example, let $f_t=g_t=t$. Then $U(s,t)=s\wedge t$. By setting $\rho\equiv 0$ we can see that $\overline A=\underline A=0$.} Note that $U$ in \eqref{eq2} is $\mathbb{F}_{s\wedge t}$-measurable, and thus Dynkin game ends at the minimum of $\rho$ and $\tau$. 
However, for a general $U$, it may be possible that $\overline A>\underline A$ even for some very natural choices of $U$. For example, consider
\begin{equation}\label{eq3}
U(s,t)=|s-t|.
\end{equation}
With this choice of $U$, the problem in \eqref{eq1} becomes deterministic, and it is easy to see that $\overline A=\lceil T/2 \rceil>0=\underline A$. As opposed to the Dynkin game in which $U$ is given by \eqref{eq2}, we can see that the game with $U$ given by $\eqref{eq3}$ has not ended when only one of the players has stopped, i.e., the payoff will be further affected by the player who stops later. Intuitively, the failure of the equality $\overline A=\underline A$ of the two game values in \eqref{eq1} is due to the fact that the minimizer in the first game (represented by $\overline A$) is weaker than the minimizer in the second game (represented by $\underline A$). The opposite holds for the maximizer. In other words, the inner player in either of the games knows the outer player's prefixed strategy but not vice versa. 

One could try to amend the above situation by strengthening the outer players in these two games by giving them more choices. We will let them use strategies rather than just stopping times (which as we shall see are the simplest possible form of strategies). 
That is, we consider the stopping games
\begin{equation}\notag
\inf_\Rho\sup_{\tau\in\T}\E[U(\Rho(\tau),\tau)]\quad\text{and}\quad\sup_{\Tau}\inf_{\rho\in\T} \E[U(\rho,\Tau(\rho))],
\end{equation}
where $\Rho(\cdot),\Tau(\cdot):\ \T\mapsto\T$ satisfy certain non-anticipativity conditions. In fact, it is more meaningful to state these games with using strategies for the outer players because the inner players' actions changes the natural filtration of the reward processes and the games are still continuing after those actions. Hence we need to let the outer players adjust their strategies given the actions of the inner players.

One possible definition of a class of non-anticipative stopping strategies (we denote the collection of them as $\bT^i$) would be that, $\Rho\in\bT^i$, if $\Rho: \T\mapsto\T$ satisfies
$$\text{either}\quad\Rho(\sigma_1)=\Rho(\sigma_2)\leq\sigma_1\wedge\sigma_2\quad\text{or}\quad \Rho(\sigma_1)\wedge\Rho(\sigma_2)>\sigma_1\wedge\sigma_2,\quad\forall\sigma_1,\sigma_2\in\T.$$
That is, either the outer player acts first and chooses just a stopping  time, or she waits and observes the inner player and starting at the very next step makes a decision knowing the action of the inner player. 
Each stopping time $\T\subset\bT^i$ can be regarded as a stopping strategy that is indifferent to all inner stopping times. 

With the above definition let us consider the game 
\begin{equation}\label{eq4}
\overline B:=\inf_{\Rho\in\bT^i}\sup_{\tau\in\T}\E[U(\Rho(\tau),\tau)]\quad\text{and}\quad\underline B:=\sup_{\Tau\in\bT^i}\inf_{\rho\in\T} \E[U(\rho,\Tau(\rho))].
\end{equation}
Now compared to \eqref{eq1}, the outer players in the above two games have more power as they have more choices ($\bT^i$ other than $\T$). 
However, even in this situation it may still be the case that
\begin{equation}\label{eq:ineqinB}
\overline B>\underline B,
\end{equation}
which shows that the outer players still do not have enough strength to make $\overline B=\underline B$. Below is an example showing that $\overline B>\underline B$.
\begin{example}\label{g1}
Let $T=1$ and $U(s,t)=1_{\{s\neq t\}},\ t=0,1$. Then there are only two elements, $\Rho^0$ and $\Rho^1$, in $\bT^i$, with $\Rho^0(0)=\Rho^0(1)=0$ and $\Rho^1(0)=\Rho^1(1)=1$. It can be shown that $\overline B=1$ and $\underline B=0$.
\end{example}

Another possible definition of non-anticipative stopping strategies (we denote the collection as $\bT^{ii}$) would be that, $\Rho\in\bT^{ii}$, if $\Rho: \T\mapsto\T$ satisfies
\begin{equation}\label{e12345}
\text{either}\quad\Rho(\sigma_1)=\Rho(\sigma_2)<\sigma_1\wedge\sigma_2\quad\text{or}\quad \Rho(\sigma_1)\wedge\Rho(\sigma_2)\geq\sigma_1\wedge\sigma_2,\quad\forall\sigma_1,\sigma_2\in\T.
\end{equation}
That is, either the outer player acts first strictly before the other player or it waits and observes the inner player and at that very moment makes a decision knowing the action of the inner player. 
Now consider
\begin{equation}\label{eq5}
\overline C:=\inf_{\Rho\in\bT^{ii}}\sup_{\tau\in\T}\E[U(\Rho(\tau),\tau)]\quad\text{and}\quad\underline C:=\sup_{\Tau\in\bT^{ii}}\inf_{\rho\in\T} \E[U(\rho,\Tau(\rho))].
\end{equation}
Since $\bT^i\subset\bT^{ii}$,\footnote{To wit, let $\Rho\in\mathbb{T}^i$ and take $\sigma_1,\sigma_2\in\mathcal{T}$. If $\Rho(\sigma_1)\wedge\Rho(\sigma_2)>\sigma_1\wedge\sigma_2$, then \eqref{e12345} is satisfied. Now assume $\Rho(\sigma_1)=\Rho(\sigma_2)\leq\sigma_1\wedge\sigma_2$. If the strict inequality holds, then the first part of \eqref{e12345} is satisfied. Otherwise, $\Rho(\sigma_1)=\Rho(\sigma_2)=\sigma_1\wedge\sigma_2$, which implies the second part of \eqref{e12345}.} the outer players here have more power compared to the case in \eqref{eq4}.   

It turns out that by using strategies in $\bT^{ii}$, the outer players in \eqref{eq5} have too much power now, in the sense that it is possible that
\begin{equation}\label{eq:diffenofCs}
\overline C<\underline C.
\end{equation}
We still use \exref{g1} as an example.

\begin{example}\label{g2}
Let $T=1$ and $U(s,t)=1_{\{s\neq t\}},\ t=0,1$. Then in this case $\bT^{ii}$ is the set of all the maps from $\T$ to $\T$. By letting $\Rho(0)=0$ and $\Rho(1)=1$, we have that $\overline C=0$. By Letting $\Tau(0)=1$ and $\Tau(1)=0$, we have that $\underline C=1$.
\end{example}

Observe that $\overline B=\underline C$ and $\underline B=\overline C$ in Examples \ref{g1} and \ref{g2}. In fact it is by no means a coincidence as we will see later in this paper. That is, we always have
\begin{equation}\label{eq6}
\overline V:=\inf_{\Rho\in\bT^{ii}}\sup_{\tau\in\T}\E[U(\Rho(\tau),\tau)]=\sup_{\Tau\in\bT^i}\inf_{\rho\in\T} \E[U(\rho,\Tau(\rho))]=:\underline V.
\end{equation}
 Using $\bT^{ii}$ for $\Rho$ (the outer player) in the first game and $\bT^i$ for $\Tau$ (the outer player) in the second game above can be thought of as striking a balance between ``not enough power'' in \eqref{eq4} and ``too much power'' in \eqref{eq5} for the outer player. 
An intuitive reason of $\overline V=\underline V$ in \eqref{eq6} is that, at each time period we designate the same player (here we choose ``sup'') to act first (if one interprets ``to stop'' and ``not to stop'' as the allowable set of actions). So this player (``sup'') can only take advantage of the other's (``inf's'') previous behavior (as opposed to ``inf'' taking advantage of ``sup's'' current behavior in addition). 

\begin{remark}
In the continuous-time case, we have also have \eqref{eq:ineqinB} and \eqref{eq:diffenofCs} in general (see Remark
2.1, in \cite{MR3503728}). But in order to solve this problem the latter paper assumes that the pay-off is right continuous along stopping times in the sense of
expectation as in \cite{Ko1}, which is the most relaxed assumption in the continuous optimal stopping time literature (see also \cite{Ko2,Ko3}), and a result the difference between
the two types of non-anticipativity conditions disappear, i.e. $\overline B=\underline B$ and $\overline C=\underline C$. Hence the discrete-time case is interesting, because no such technical condition is needed and one is able to observe the structure of the problem more clearly.
\end{remark}

Our new zero-sum stopping game, which can be considered as a \textbf{two stage} Dynkin game, also captures the ``game nature''--- the interaction between players (i.e., players can adjust their own strategies according to the other's behavior). For example, unlike in $\overline A$ in \eqref{eq1} where $\rho$ does not depend on $\tau$ (but $\tau$ depends on $\rho$), in $\overline V$, $\Rho$ depends on $\tau$ by the definition of $\bT^{ii}$ (and of course $\tau$ still depends on $\Rho$ as $\tau$ is the inner player).

Since \cite{MR0241121} Dynkin games have been studied extensively, and we refer to the survey paper \cite{zbMATH06169716} and the references therein. The stopping game we introduce here is more suitable for handing conflict since in general players take turns in playing the game and the game does not end when one of the players act. In other words, stopping games are not always \emph{duels}, which is what the usual Dynkin game models. In Section 3.2 and 3.3, we will provide two applications of our game, including a robust utility maximization problem involving two American options, and an example of competing companies choosing times to enter the market.

We can also relate our paper to the following interesting phenomenon observed in stochastic differential games: In a zero-sum game with one player doing the inside optimization using an open loop strategy and the other player doing outside optimization using an Elliott-Kalton (non-anticipative) strategy, it has been observed (see for example \cite{bardi-capuzzo}, Theorem 3.11) that the infimum and supremum can not be exchanged. For the game to have a value when one needs to change the strength of the players appropriately: the outside player should have closed loop strategies and the inside player should have open loop controls. (This was only proved analytically using a viscosity comparison.)  We are able to observe this in an optimal stopping problem for the first time. Moreover, we prove it directly using probabilistic techniques only.

The rest of the paper is organized as follows. In the next section, we introduce the setup and the main result. We provide three examples in Section 3. In Section 4, we give the proof of the main result. Finally we give some insight for the corresponding problems in continuous time in Section 5.  

\section{The setup and the main result}
Let $(\Omega,\mathcal{F},P)$ be a probability space, and $\mathbb{F}=(\mathcal{F}_t)_{t=0,\dotso,T}$ be the filtration enlarged by $P$-null sets, where $T\in\mathbb{N}$ is the time horizon. Let $U:\{0,\dotso,T\}\times\{0,\dotso,T\}\times\Omega\mapsto\mathbb{R}$, such that $U(s,t,\cdot)\in\mathcal{F}_{s\vee t}$. For simplicity, we assume that $U$ is bounded. Denote $\E_t[\cdot]$ for $\E[\cdot|\mathcal{F}_t]$. We shall often omit to write \lq\lq almost surely\rq\rq\ (when a property holds outside a $P$-null set). Let $\T_t$ be the set of $\mathbb{F}$-stopping times taking values in $\{t\dotso,T\}$, and $\T:=\T_0$. We define the stopping strategies of Type I and Type II as follows:
\begin{definition}
$\Rho$ is a stopping strategy of Type I (resp. II), if $\Rho:\ \T\mapsto\T$ satisfies the \lq\lq non-anticipativity\rq\rq\ condition of Type I (resp. II), i.e., for any $\sigma_1,\sigma_2\in\T$,
\begin{equation}\label{e1}
\text{either}\quad\Rho(\sigma_1)=\Rho(\sigma_2)\leq\text{(resp. $<$) }\sigma_1\wedge\sigma_2\quad\text{or}\quad \Rho(\sigma_1)\wedge\Rho(\sigma_2)>\text{(resp. $\geq$) }\sigma_1\wedge\sigma_2.
\end{equation}
Denote $\bT^i$ (resp. $\bT^{ii}$) as the set of stopping strategies of Type I (resp. II).
\end{definition}
\begin{remark}
We can treat $\T$ as a subset of $\bT^i$ and $\bT^{ii}$ (i.e., each $\tau\in\T$ can be treated as the map with only one value $\tau$). Hence we have $\T\subset\bT^i\subset\bT^{ii}$.
\end{remark}

Consider the problem
\begin{equation}\label{e2}
\overline V:=\inf_{\Rho\in\bT^{ii}}\sup_{\tau\in\T}\E[U(\Rho(\tau),\tau)]\quad\text{and}\quad\underline V:=\sup_{\Tau\in\bT^i}\inf_{\rho\in\T}\E[U(\rho,\Tau(\rho))].
\end{equation}
We shall convert this problem into a Dynkin game. In order to do so, let us introduce the following two processes that will represent the payoffs in the Dynkin game. Let
\begin{equation}\label{e3}
V_t^1:=\essinf_{\rho\in\T_t}\E_t[U(\rho,t)],\quad t=0,\dotso,T,
\end{equation}
and
\begin{equation}\label{e13}
V_t^2:=\max\left\{\esssup_{\tau\in\T_{t+1}}\E_t[U(t,\tau)], V_t^1\right\},\quad t=0,\dotso,T-1,
\end{equation}
and $V_T^2=U(T,T)$. Observe that
\begin{equation}\label{e4}
V_t^1\leq V_t^2,\quad t=0,\dotso,T.
\end{equation}
By the classic optimal stopping theory (see e.g., \cite[Appendix D]{MR1640352}), there exist an optimizer $\rho_u(t)\in\T_t$ for $V_t^1$, and an optimizer $\tau_u(t)\in\T_{t+1}$ for $\esssup_{\tau\in\T_{t+1}}\E_t[U(t,\tau)]$, $t=0,\dotso,T-1$. We let  $\rho_u(T)=\tau_u(T)=T$ for convenience. 

%\begin{remark}
%Observe that the discrete-time version of \asref{a1} always holds. Indeed for $t\in\{0,\dotso,T\}$, there exists an optimizer $\rho_u(t)$ for $V_t^1$. Then for any $\sigma\in\T$ and $t\in\{0,\dotso,T\}$,
%$$\{\rho_u(\sigma)\leq t\}=\cup_{i=0}^t\left(\{\sigma=i\}\cap\rho_u(i)\leq t\}\right)\in\mathcal{F}_t.$$
%Therefore, the discrete-time version of our main result below always holds (without \asref{a1})
%\end{remark}

Define the corresponding Dynkin game as follows:
\begin{equation}\notag
V:=\inf_{\rho\in\T}\sup_{\tau\in\T}\E\left[V_\tau^1 1_{\{\tau
\leq\rho\}}+V_\rho^2 1_{\{\tau>\rho\}}\right]=\sup_{\tau\in\T}\inf_{\rho\in\T}\E\left[V_\tau^1 1_{\{\tau\leq\rho\}}+V_\rho^2 1_{\{\tau>\rho\}}\right],
\end{equation}
where the second equality above follows from \eqref{e4}. Moreover, there exists a saddle point $(\rho_d,\tau_d)$ (see e.g., \cite{zbMATH06169716}) described by
\begin{equation}\label{e6}
\rho_d:=\inf\{s\geq 0:\ V_s=V_s^2\}\quad\text{and}\quad \tau_d:=\inf\{s\geq 0:\ V_s=V_s^1\},
\end{equation}
where
$$V_t:=\essinf_{\rho\in\T_t}\esssup_{\tau\in\T_t}\E_t\left[V_\tau^1 1_{\{\tau
\leq\rho\}}+V_\rho^2 1_{\{\tau>\rho\}}\right]=\esssup_{\tau\in\T_t}\essinf_{\rho\in\T_t}\E_t\left[V_\tau^1 1_{\{\tau\leq\rho\}}+V_\rho^2 1_{\{\tau>\rho\}}\right].$$
That is,
$$V=\sup_{\tau\in\T}\E\left[V_\tau^1 1_{\{\tau
\leq\rho_d\}}+V_{\rho_d}^2 1_{\{\tau>\rho_d\}}\right]=\inf_{\rho\in\T}\E\left[V_{\tau_d}^1 1_{\{\tau_d\leq\rho\}}+V_\rho^2 1_{\{\tau_d>\rho\}}\right].$$

Below is the main result of this paper. 
\begin{theorem}\label{t1}
We have that
$$\overline V=\underline V=V.$$
Besides, there exists $\Rho^*\in\bT^{ii}$ and $\tau^*:\bT^{ii}\mapsto\T$ described by 
\begin{equation}\label{e5}
\Rho^*(\tau)=\rho_d 1_{\{\tau>\rho_d\}}+\rho_u(\tau) 1_{\{\tau\leq\rho_d\}},\quad\tau\in\T,
\end{equation}
and
\begin{equation}\label{e8}
\tau^*(\Rho):=\tau_d 1_{\{\tau_d
\leq\Rho(\tau_d)\}}+\tau_u(\Rho(\tau_d)) 1_{\{\tau_d>\Rho(\tau_d)\}},\quad\Rho\in\bT^{ii},
\end{equation}
such that
$$\overline V=\sup_{\tau\in\T}\E[U(\Rho^*(\tau),\tau)]=\inf_{\Rho\in\bT^{ii}}\E[U(\Rho(\tau^*(\Rho)),\tau^*(\Rho))].$$
Similarly, there exists $\Tau^{**}\in\bT^i$ and $\rho^{**}:\bT^i\mapsto\T$ described by 
\begin{equation}\label{e17}
\Tau^{**}(\rho)=\tau_d 1_{\{\rho\geq\tau_d\}}+\tau_u(\rho) 1_{\{\rho<\tau_d\}},\quad\rho\in\T,
\end{equation}
and
\begin{equation}\notag
\rho^{**}(\Tau):=\rho_d 1_{\{\rho_d
<\Tau(\rho_d)\}}+\rho_u(\Tau(\rho_d)) 1_{\{\rho_d\geq\Tau(\rho_d)\}},\quad\Tau\in\bT^i,
\end{equation}
such that
$$\underline V=\inf_{\rho\in\T}\E[U(\rho,\Tau^{**}(\rho))]=\sup_{\Tau\in\bT^i}\E[U(\rho^{**}(\Tau),\Tau(\rho^{**}(\Tau)))].$$
\end{theorem}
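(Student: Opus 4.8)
The plan is to establish $\overline V=V$ and $\underline V=V$ separately, each through a pair of matching inequalities, reading off the optimizers in the process. The backbone is the standard dynamic-programming recursion for the Dynkin value, $V_t=\max\{V_t^1,\min\{V_t^2,\E_t[V_{t+1}]\}\}$ with $V_T=V_T^1=V_T^2=U(T,T)$, from which I extract two facts used repeatedly: since $V_t\ge V_t^1$ always, the definition $\tau_d=\inf\{s:V_s=V_s^1\}$ in \eqref{e6} forces $V_s>V_s^1$ on $\{s<\tau_d\}$; and $V_s>V_s^1$ forces $\min\{V_s^2,\E_s[V_{s+1}]\}>V_s^1$, hence $V_s^2>V_s^1$, which by \eqref{e13} means $V_s^2=\esssup_{\tau\in\T_{s+1}}\E_s[U(s,\tau)]$. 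Dually, $\rho_d=\inf\{s:V_s=V_s^2\}$ gives $V_s^1=\essinf_{\rho\in\T_s}\E_s[U(\rho,s)]=V_s^2$-free identities on $\{s<\rho_d\}$. I combine these with the saddle representation of $V$ recorded after \eqref{e6}.

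For $\overline V\le V$ I first check $\Rho^*$ of \eqref{e5} lies in $\bT^{ii}$ by testing \eqref{e1} on the partition of $\Omega$ according to whether each of $\sigma_1,\sigma_2$ exceeds $\rho_d$; here the case $\{\sigma_1\le\rho_d<\sigma_2\}$ uses $\rho_u(\sigma_1)\in\T_{\sigma_1}$ and, crucially, only the \emph{non-strict} second alternative $\Rho^*(\sigma_1)\wedge\Rho^*(\sigma_2)\ge\sigma_1\wedge\sigma_2$ holds (it may fail to be strict when $\rho_u(\sigma_1)=\sigma_1$), so $\Rho^*$ genuinely needs the Type~II freedom and would not be admissible in $\bT^i$. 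Then for arbitrary $\tau\in\T$ I split over $\{\tau>\rho_d\}$, where $\Rho^*(\tau)=\rho_d$ and conditioning on $\mathcal F_{\rho_d}$ yields $\E_{\rho_d}[U(\rho_d,\tau)]\le\esssup_{\tau'\in\T_{\rho_d+1}}\E_{\rho_d}[U(\rho_d,\tau')]\le V_{\rho_d}^2$, and $\{\tau\le\rho_d\}$, where $\Rho^*(\tau)=\rho_u(\tau)$ gives $\E_\tau[U(\rho_u(\tau),\tau)]=V_\tau^1$. Thus $\E[U(\Rho^*(\tau),\tau)]\le\E[V_\tau^1 1_{\{\tau\le\rho_d\}}+V_{\rho_d}^2 1_{\{\tau>\rho_d\}}]$, whose supremum over $\tau$ equals $V$ since $\rho_d$ is the Dynkin minimizer, giving $\overline V\le\sup_\tau\E[U(\Rho^*(\tau),\tau)]\le V$.

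The main obstacle is the reverse bound $\overline V\ge V$, i.e.\ $\sup_\tau\E[U(\Rho(\tau),\tau)]\ge V$ for every $\Rho\in\bT^{ii}$, which I prove by playing the response $\tau^*=\tau^*(\Rho)$ of \eqref{e8} with $\rho_0:=\Rho(\tau_d)$. The crucial step is that on $\{\tau_d>\rho_0\}$, where $\tau^*=\tau_u(\rho_0)\ge\rho_0+1>\rho_0$, $\tau_d>\rho_0$, and $\Rho(\tau_d)=\rho_0$, the second alternative of \eqref{e1} applied to $\sigma_1=\tau^*,\sigma_2=\tau_d$ would give $\Rho(\tau^*)\wedge\Rho(\tau_d)\ge\tau^*\wedge\tau_d>\rho_0=\Rho(\tau_d)$, contradicting $\Rho(\tau^*)\wedge\Rho(\tau_d)\le\Rho(\tau_d)$; hence the first alternative holds and $\Rho(\tau^*)=\Rho(\tau_d)=\rho_0$ there. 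Conditioning on $\mathcal F_{\rho_0}$ then gives $\E_{\rho_0}[U(\rho_0,\tau_u(\rho_0))]=\esssup_{\tau\in\T_{\rho_0+1}}\E_{\rho_0}[U(\rho_0,\tau)]$, which equals $V_{\rho_0}^2$ by the recursion fact above because $\rho_0<\tau_d$; on $\{\tau_d\le\rho_0\}$ one has $\tau^*=\tau_d$, $\Rho(\tau^*)=\rho_0\in\T_{\tau_d}$, and conditioning on $\mathcal F_{\tau_d}$ gives $\E_{\tau_d}[U(\rho_0,\tau_d)]\ge V_{\tau_d}^1$. Combining, $\E[U(\Rho(\tau^*),\tau^*)]\ge\E[V_{\tau_d}^1 1_{\{\tau_d\le\rho_0\}}+V_{\rho_0}^2 1_{\{\tau_d>\rho_0\}}]\ge V$, the last step because $\tau_d$ is the Dynkin maximizer and $\rho_0\in\T$ is an admissible response. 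This gives $\overline V\ge V$; with the previous paragraph, $\overline V=V$, $\Rho^*$ is optimal, and since every term of the infimum over $\Rho$ is $\ge V$ while $\Rho=\Rho^*$ attains it, the representation $\overline V=\inf_\Rho\E[U(\Rho(\tau^*(\Rho)),\tau^*(\Rho))]$ follows.

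Finally I treat $\underline V=V$ by the dual argument, exchanging the two players: $\Tau^{**}$ of \eqref{e17} plays the role of $\Rho^*$—one checks $\Tau^{**}\in\bT^i$ on the same three-set partition, now using $\tau_u(t)\in\T_{t+1}$ so that the \emph{strict} second alternative $\Tau^{**}(\sigma_1)\wedge\Tau^{**}(\sigma_2)>\sigma_1\wedge\sigma_2$ genuinely holds, which is exactly why $\Tau^{**}$ belongs to $\bT^i$ rather than needing $\bT^{ii}$—and $\rho^{**}$ plays the role of $\tau^*$. The same two ingredients reappear but are distributed across the two inequalities: $\underline V\ge V$ (using $\Tau^{**}$ against arbitrary $\rho$) again invokes the recursion identity $V_\rho^2=\esssup_{\tau\in\T_{\rho+1}}\E_\rho[U(\rho,\tau)]$ on $\{\rho<\tau_d\}$, while $\underline V\le V$ (responding to arbitrary $\Tau\in\bT^i$ by $\rho^{**}$ with $\tau_0:=\Tau(\rho_d)$) uses the non-anticipativity forcing step, where now the \emph{strict} inequality in the second alternative of the Type~I condition is the one excluded, yielding $\Tau(\rho^{**})=\Tau(\rho_d)=\tau_0$ on $\{\rho_d\ge\tau_0\}$. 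The measure-theoretic points left throughout—measurable selection so that $\rho_u(\tau),\tau_u(\rho)$ are well defined at stopping-time arguments with $\E_\tau[U(\rho_u(\tau),\tau)]=V_\tau^1$, the pasting of stopping times into $\T_{\tau_d}$, and the membership $\{\tau_d\le\rho_0\}\in\mathcal F_{\tau_d}$, $\{\tau_d>\rho_0\}\in\mathcal F_{\rho_0}$—are routine and are handled by conditioning on the level sets $\{\tau_d=t\}$ and $\{\rho_0=t\}$.
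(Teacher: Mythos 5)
Your proposal follows the paper's own proof essentially line by line: the same two chains $\overline V\le\sup_{\tau\in\T}\E[U(\Rho^*(\tau),\tau)]\le V$ and, for arbitrary $\Rho\in\bT^{ii}$, $\sup_{\tau\in\T}\E[U(\Rho(\tau),\tau)]\ge\E[U(\Rho(\tau^*(\Rho)),\tau^*(\Rho))]\ge V$; the same level-set arguments for admissibility of $\Rho^*$ and $\tau^*$; the same exclusion argument based on the Type II condition; and the same use of the Dynkin saddle point $(\rho_d,\tau_d)$. Your recursion-based fact that $V_s^2=\esssup_{\tau\in\T_{s+1}}\E_s[U(s,\tau)]$ on $\{s<\tau_d\}$ is the paper's observation $V_s^1<V_s\le V_s^2$ there, in slightly different clothing.

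One intermediate assertion is wrong, though the proof survives its removal. On $\{\tau_d\le\rho_0\}$, where $\rho_0:=\Rho(\tau_d)$, you claim $\Rho(\tau^*(\Rho))=\rho_0$. The Type II condition cannot give this: applying \eqref{e1} to $\sigma_1=\tau^*(\Rho)$, $\sigma_2=\tau_d$, the first alternative is excluded on this event (it would force $\Rho(\tau_d)<\tau_d\le\rho_0$), so the second alternative holds, and it yields only $\Rho(\tau^*(\Rho))\ge\tau^*(\Rho)\wedge\tau_d=\tau_d$ --- the second alternative never forces the two values to coincide. Indeed, when $P(\tau_d>\rho_0)>0$ one typically has $\tau^*(\Rho)\ne\tau_d$ as elements of $\T$ (they differ on $\{\tau_d>\rho_0\}$ whenever $\tau_u(\rho_0)\ne\tau_d$ there), and then $\Rho$ is free to assign $\Rho(\tau^*(\Rho))$ any value $\ge\tau_d$ on $\{\tau_d\le\rho_0\}$, not necessarily $\rho_0$. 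The repair is exactly what the paper does: keep the integrand $U(\Rho(\tau^*(\Rho)),\tau_d)$ rather than replacing it by $U(\rho_0,\tau_d)$, and use only $\Rho(\tau^*(\Rho))\ge\tau_d$ to conclude $\E_{\tau_d}[U(\Rho(\tau^*(\Rho)),\tau_d)]\ge V_{\tau_d}^1$ from the definition of $V^1$ as an essential infimum over $\T_{\tau_d}$. With that one substitution, which your own exclusion argument already delivers, your chain of inequalities and hence the whole argument is correct.
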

\begin{remark}\label{r1}
As the inner player in $\overline V$, $\tau$ depends on $\Rho$. Therefore, as a good reaction to $\Rho$, $\tau^*(\cdot)$ defined in \eqref{e8} is a map from $\bT^{ii}$ to $\T$ instead of a stopping time. (To convince oneself, one may think of $\inf_x\sup_y f(x,y)=\inf_x f(x,y^*(x))$.)
\end{remark}
\begin{corollary}
$$\overline V=\E[U(\Rho^*(\tau^*(\Rho^*)),\tau^*(\Rho^*))].$$
Moreover,
\begin{equation}\label{e16}
\Rho^*(\tau^*(\Rho^*))=\rho_d 1_{\{\tau_d>\rho_d\}}+\rho_u(\tau_d) 1_{\{\tau_d
\leq\rho_d\}}\quad\text{and}\quad\tau^*(\Rho^*)=\tau_d 1_{\{\tau_d
\leq\rho_d\}}+\tau_u(\rho_d) 1_{\{\tau_d>\rho_d\}}.
\end{equation}
Similar results hold for $\underline V$.
\end{corollary}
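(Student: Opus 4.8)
The plan is to prove the four inequalities $\overline V\le V$, $\overline V\ge V$, $\underline V\ge V$ and $\underline V\le V$ separately: the first two combine to give $\overline V=V$ together with the optimality of $\Rho^*$ and of the reaction $\tau^*$, and the last two give $\underline V=V$ with $\Tau^{**}$ and $\rho^{**}$; the $\underline V$ case is the exact mirror of the $\overline V$ case, so I would write the latter in detail and only indicate the changes. Throughout I would repeatedly use two elementary facts: for stopping times $S,T$ one has $\{S\le T\},\{S<T\}\in\mathcal{F}_S\cap\mathcal{F}_T$; and the ``indicator-conditioning'' identity that, for $A\in\mathcal{F}_S$, if two stopping times agree on $A$ then replacing one by the other leaves $1_A\,\E_S[U(\cdot,S)]$ unchanged (since $1_A\E_S[X]=0$ whenever $X$ is supported on $A^c$).

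For $\overline V\le V$ I would first verify $\Rho^*\in\bT^{ii}$ pointwise, splitting $\Omega$ according to which of $\{\,\cdot\le\rho_d\}$, $\{\,\cdot>\rho_d\}$ contains $\sigma_1$ and $\sigma_2$: in the two same-side cases \eqref{e5} yields $\Rho^*(\sigma_1)=\Rho^*(\sigma_2)=\rho_d<\sigma_1\wedge\sigma_2$ (both $>\rho_d$) or $\Rho^*(\sigma_1)\wedge\Rho^*(\sigma_2)\ge\sigma_1\wedge\sigma_2$ (both $\le\rho_d$, using $\rho_u(\sigma_i)\ge\sigma_i$), and in the mixed case $\rho_u(\sigma_i)\wedge\rho_d\ge\sigma_1\wedge\sigma_2$ again gives the second alternative. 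Then, for fixed $\tau\in\T$, I would split $\E[U(\Rho^*(\tau),\tau)]$ over $\{\tau\le\rho_d\}\in\mathcal{F}_\tau$ and $\{\tau>\rho_d\}\in\mathcal{F}_{\rho_d}$: on the first, conditioning on $\mathcal{F}_\tau$ and $\E_\tau[U(\rho_u(\tau),\tau)]=V^1_\tau$ give exactly $\E[V^1_\tau 1_{\{\tau\le\rho_d\}}]$; on the second, conditioning on $\mathcal{F}_{\rho_d}$, replacing $\tau$ by $\tau\vee(\rho_d+1)\in\T_{\rho_d+1}$ (legitimate by the indicator identity), and invoking \eqref{e13} give the bound $\E[V^2_{\rho_d}1_{\{\tau>\rho_d\}}]$. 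Adding the pieces and using the saddle representation $V=\sup_\tau\E[V^1_\tau 1_{\{\tau\le\rho_d\}}+V^2_{\rho_d}1_{\{\tau>\rho_d\}}]$ yields $\E[U(\Rho^*(\tau),\tau)]\le V$, whence $\overline V\le V$.

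The heart of the argument is $\overline V\ge V$, which I would obtain by showing $\E[U(\Rho(\tau^*(\Rho)),\tau^*(\Rho))]\ge V$ for \emph{every} $\Rho\in\bT^{ii}$. Writing $\rho:=\Rho(\tau_d)$, the crucial step is that the Type II dichotomy \eqref{e1} applied to the pair $(\tau^*(\Rho),\tau_d)$ forces $\Rho(\tau^*(\Rho))=\Rho(\tau_d)=\rho$ on $\{\tau_d>\rho\}$: there $\tau^*(\Rho)=\tau_u(\rho)>\rho$, so $\tau^*(\Rho)\wedge\tau_d>\rho$, which contradicts the second alternative $\Rho(\tau^*(\Rho))\wedge\rho\ge\tau^*(\Rho)\wedge\tau_d$ and leaves only the first; on $\{\tau_d\le\rho\}$ one has $\tau^*(\Rho)=\tau_d$ so the identity is trivial. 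Having pinned $\Rho(\tau^*(\Rho))$ to $\rho$, I would split again: $\{\tau_d\le\rho\}$ gives $\ge\E[V^1_{\tau_d}1_{\{\tau_d\le\rho\}}]$ as before, while on $\{\tau_d>\rho\}\subset\{\rho<\tau_d\}$ I would use the Dynkin-game fact that $V_\rho>V^1_\rho$ there, hence $V^2_\rho>V^1_\rho$, so by \eqref{e13} $V^2_\rho$ equals its first term $\E_\rho[U(\rho,\tau_u(\rho))]$; this turns the second piece into exactly $\E[V^2_\rho 1_{\{\tau_d>\rho\}}]$. Summing and using the other saddle representation $V=\inf_{\rho'}\E[V^1_{\tau_d}1_{\{\tau_d\le\rho'\}}+V^2_{\rho'}1_{\{\tau_d>\rho'\}}]$ at the admissible $\rho'=\rho$ gives the bound $\ge V$.

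Combining $\overline V\le\sup_\tau\E[U(\Rho^*(\tau),\tau)]\le V\le\inf_\Rho\E[U(\Rho(\tau^*(\Rho)),\tau^*(\Rho))]\le\overline V$ then gives $\overline V=V$ and all optimality claims simultaneously. For $\underline V$ I would verify $\Tau^{**}\in\bT^i$ by the analogous three-case check, prove $\inf_\rho\E[U(\rho,\Tau^{**}(\rho))]\ge V$ by the same splitting, and prove $\underline V\le V$ via $\rho^{**}$, where the Type I dichotomy \eqref{e1} applied to $(\rho^{**}(\Tau),\rho_d)$ replaces the Type II step, its second alternative being ruled out on $\{\tau\le\rho_d\}$ because $\rho^{**}(\Tau)\wedge\rho_d\ge\tau=\Tau(\rho_d)$. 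The Corollary then follows by substitution: plugging $\Rho=\Rho^*$ into \eqref{e8} and evaluating indicators shows $\{\tau_d\le\Rho^*(\tau_d)\}=\{\tau_d\le\rho_d\}$ and $\Rho^*(\tau_d)=\rho_d$ on $\{\tau_d>\rho_d\}$, reducing \eqref{e8} and \eqref{e5} to \eqref{e16}, while the value identity is immediate from $\overline V=\sup_\tau\E[U(\Rho^*(\tau),\tau)]=\inf_\Rho\E[U(\Rho(\tau^*(\Rho)),\tau^*(\Rho))]$ evaluated at this common point. I expect the pointwise dichotomy arguments that rule out one alternative of \eqref{e1} to be the delicate part, followed by the repeated indicator-conditioning manipulations and the identification $V^2_\rho=\E_\rho[U(\rho,\tau_u(\rho))]$ on $\{\rho<\tau_d\}$.
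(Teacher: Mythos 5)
Your derivation of the Corollary itself---your final paragraph---is the paper's own argument: plug $\Rho=\Rho^*$ into \eqref{e8}, check $\{\tau_d\le\Rho^*(\tau_d)\}=\{\tau_d\le\rho_d\}$ (via $\rho_u(\tau_d)\ge\tau_d$ on $\{\tau_d\le\rho_d\}$ and $\Rho^*(\tau_d)=\rho_d<\tau_d$ on $\{\tau_d>\rho_d\}$), then check $\{\tau^*(\Rho^*)\le\rho_d\}=\{\tau_d\le\rho_d\}$ (via $\tau_u(t)\ge t+1$ for $t<T$), and read off \eqref{e16}; the value identity follows by evaluating the two optimality statements of Theorem~\ref{t1} at the pair $(\Rho^*,\tau^*(\Rho^*))$. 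The difference is that the paper simply cites Theorem~\ref{t1} at this point, while you re-prove it; your re-proof follows the same route as the paper's Section 4 (membership $\Rho^*\in\bT^{ii}$, the upper bound $\sup_{\tau\in\T}\E[U(\Rho^*(\tau),\tau)]\le V$ by conditioning, and the lower bound $\E[U(\Rho(\tau^*(\Rho)),\tau^*(\Rho))]\ge V$ for every $\Rho\in\bT^{ii}$ via the dichotomy \eqref{e1}).

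There is, however, one incorrectly justified step in that re-proof. On $\{\tau_d\le\rho\}$, where $\rho:=\Rho(\tau_d)$, you assert that $\Rho(\tau^*(\Rho))=\Rho(\tau_d)$ holds ``trivially'' because $\tau^*(\Rho)=\tau_d$ on that event. That inference is invalid: a general $\Rho\in\bT^{ii}$ is a map on $\T$ and need not be local, i.e.\ $\sigma_1=\sigma_2$ on an event does not imply $\Rho(\sigma_1)=\Rho(\sigma_2)$ on that event. (Locality does hold for $\rho_u(\cdot)$, $\tau_u(\cdot)$ and $\Rho^*$, which are defined by patching on the sets $\{\sigma=t\}$; that is exactly why the analogous substitutions in the Corollary are legitimate. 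But a Type II strategy may send two stopping times that agree on an event to images that differ there, both images being $\ge\sigma_1\wedge\sigma_2$, without violating \eqref{e1}.) What the dichotomy actually gives on $\{\tau_d\le\Rho(\tau_d)\}$ is the following: the first alternative would force $\Rho(\tau_d)<\tau_d\wedge\tau^*(\Rho)=\tau_d$, a contradiction, so the second alternative holds and $\Rho(\tau^*(\Rho))\ge\tau_d\wedge\tau^*(\Rho)=\tau_d$. This inequality---not the equality you claim---is what the paper establishes in Lemma~\ref{l2}, and it suffices for your next step, since $V^1_{\tau_d}$ is an essential infimum over $\T_{\tau_d}$ and hence $\E_{\tau_d}[U(\Rho(\tau^*(\Rho)),\tau_d)]\ge V^1_{\tau_d}$ on that event. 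With this one repair your argument is complete and coincides with the paper's.
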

\begin{proof}
By \eqref{e5},
$$\Rho^*(\tau_d)=\rho_d 1_{\{\tau_d>\rho_d\}}+\rho_u(\tau_d) 1_{\{\tau_d\leq\rho_d\}}.$$
If $\tau_d>\rho_d$, then $\Rho^*(\tau_d)=\rho_d<\tau_d$, which implies that $\{\tau_d>\rho_d\}\subset\{\tau_d>\Rho^*(\tau_d)\}$. If $\tau_d\leq\rho_d$, then $\Rho^*(\tau_d)=\rho_u(\tau_d)\geq\tau_d$, which implies that $\{\tau_d\leq\rho_d\}\subset\{\tau_d\leq\Rho^*(\tau_d)\}$. Therefore, $\{\tau_d>\rho_d\}=\{\tau_d>\Rho^*(\tau_d)\}$ and $\{\tau_d\leq\rho_d\}=\{\tau_d\leq\Rho^*(\tau_d)\}$. Hence we have that
$$\tau^*(\Rho^*)=\tau_d 1_{\{\tau_d
\leq\rho_d\}}+\tau_u(\Rho^*(\tau_d)) 1_{\{\tau_d>\rho_d\}}=\tau_d 1_{\{\tau_d
\leq\rho_d\}}+\tau_u(\rho_d) 1_{\{\tau_d>\rho_d\}},$$
where the second equality follows from that $\Rho^*(\tau_d)=\rho_d$ on $\{\tau_d>\rho_d\}$.

Now if $\tau_d\leq\rho_d$, then $\tau^*(\Rho^*)=\tau_d\leq\rho_d$, and thus $\{\tau_d\leq\rho_d\}\subset\{\tau^*(\Rho^*)\leq\rho_d\}$. If $\tau_d>\rho_d$, then $\tau^*(\Rho^*)=\tau_u(\rho_d)>\rho_d$ since $\tau_u(t)\geq t+1$ if $t<T$, and thus $\{\tau_d>\rho_d\}\subset\{\tau^*(\Rho^*)>\rho_d\}$. Therefore, $\{\tau_d\leq\rho_d\}=\{\tau^*(\Rho^*)\leq\rho_d\}$ and $\{\tau_d>\rho_d\}=\{\tau^*(\Rho^*)>\rho_d\}$. Hence we have that
$$\Rho^*(\tau^*(\Rho^*))=\rho_d 1_{\{\tau_d>\rho_d\}}+\rho_u(\tau^*(\Rho^*)) 1_{\{\tau_d\leq\rho_d\}}=\rho_d 1_{\{\tau_d>\rho_d\}}+\rho_u(\tau_d) 1_{\{\tau_d\leq\rho_d\}},$$
where the second equality follows from that $\tau^*(\Rho^*)=\tau_d$ on $\{\tau_d\leq\rho_d\}$.
\end{proof}

\section{Examples}
In this section we provide three examples that fall within the setup of Section 2. The first example shows that in the classical Dynkin game one does not need to use non-anticipative stopping strategies. The second example is a relevant problem from mathematical finance in which our results can be applied.  This problem is on determining the optimal exercise strategy when one trades two different American options in different directions. In the third example we consider two competing companies making an entry decision into a particular market.

\subsection{Dynkin game using non-anticipative stopping strategies} Let
$$U(s,t)=f_s 1_{\{s<t\}}+g_t 1_{\{s\geq t\}},$$
where $(f_t)_t$ and $(g_t)_t$ are $\mathbb{F}$-adapted, satisfying $f\geq g$. Then we have that 
$$V_t^1=g_t,\ t=0,\dotso,T,\quad\text{and}\quad V_t^2=f_t,\ t=0,\dotso,T-1.$$
Then by \thref{t1} we have that
\begin{eqnarray}
\notag &&\inf_{\Rho\in\bT^{ii}}\sup_{\tau\in\T}\E\left[f_{\Rho(\tau)} 1_{\{\Rho(\tau)<\tau\}}+g_\tau 1_{\{\Rho(\tau)\geq\tau\}}\right]=\sup_{\Tau\in\bT^i}\inf_{\rho\in\T}\E\left[f_\rho 1_{\{\rho<\Tau(\rho)\}}+g_{\Tau(\rho)} 1_{\{\rho\geq\Tau(\rho)\}}\right]\\
\notag&&=\sup_{\tau\in\T}\inf_{\rho\in\T}\E\left[f_\rho 1_{\{\rho<\tau\}}+g_\tau 1_{\{\rho\geq\tau\}}\right]=\inf_{\rho\in\T}\sup_{\tau\in\T}\E\left[f_\rho 1_{\{\rho<\tau\}}+g_\tau 1_{\{\rho\geq\tau\}}\right].
\end{eqnarray}
Besides, by the property of $U$, the $\Rho^*$ and $\Tau^{**}$ defined in \eqref{e5} and \eqref{e17} can w.l.o.g. be written as
$$\Rho=\rho_d\quad\text{and}\quad\Tau=\tau_d.$$
Therefore, in the Dynkin game, using  non-anticipative stopping strategies is the same as using a usual stopping time. 

\begin{remark}
In this example we let $\Rho\in\bT^{ii}$ and $\Tau\in\bT^i$. The same conclusion holds if we let  $\Rho\in\bT^i$ and $\Tau\in\bT^{ii}$ instead.
\end{remark}

\subsection{A robust utility maximization problem} Let
$$U(t,s)=\mathcal{U}(f_t-g_s),$$
where $\mathcal{U}: \mathbb{R}\mapsto\mathbb{R}$ is a utility function, and $f$ and $g$ are adapted to $\mathbb{F}$. Consider
$$\overline{\mathcal{V}}:=\sup_{\Rho\in\bT^{ii}}\inf_{\tau\in\T}\E[U(\Rho(\tau),\tau)].$$
This problem can be interpreted as the one in which an investor longs an American option $f$ and shorts an American option $g$, and the goal is to choose an optimal stopping strategy to maximize the utility according to the stopping behavior of the holder of $g$. Here we assume that the maturities of $f$ and $g$ are the same (i.e., $T$). This is without loss of generality. Indeed for instance, if the maturity of $f$ is $\hat t<T$, then we can define $f(t)=f(\hat t)$ for $t=\hat t+1,\dotso,T$.

\subsection{Time to enter the market}
There are two companies choosing when to enter a specific market. These two companies will produce the same kind of product. The one that enters the market first can start collecting profit earlier, while the one that enters second can use the other's experience (e.g., marketing strategies, technologies) to reduce its own cost. Hence, each company's entering time will affect the profit and market share no matter if it enters the market first or second.  Then a natural and simple model for this set-up would be given by our game defined in \eqref{e2}.

\section{Proof of \thref{t1}}
We will only prove the results for $\overline V$, since the proofs for $\underline V$ are similar.
\begin{lemma}\label{l1}
For any $\sigma\in\T$, $\rho_u(\sigma)\in\T$ and $\tau_u(\sigma)\in\T$. 
\end{lemma}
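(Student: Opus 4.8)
The statement to prove is \leref{l1}: for any $\sigma\in\T$, both $\rho_u(\sigma)\in\T$ and $\tau_u(\sigma)\in\T$. Here $\rho_u(t)$ and $\tau_u(t)$ are defined pointwise in $t$ as optimizers of the deterministic-index optimal stopping problems $V_t^1=\essinf_{\rho\in\T_t}\E_t[U(\rho,t)]$ and $\esssup_{\tau\in\T_{t+1}}\E_t[U(t,\tau)]$ respectively, and the claim is that when we paste these together along a random index $\sigma$, the result is still a genuine $\mathbb{F}$-stopping time taking values in $\{0,\dots,T\}$.

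\medskip

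The plan is to reduce everything to a measurability/adaptedness check, since the range condition $\rho_u(\sigma),\tau_u(\sigma)\in\{0,\dots,T\}$ is automatic from $\rho_u(t),\tau_u(t)\in\{0,\dots,T\}$. First I would make precise the meaning of $\rho_u(\sigma)$: it is the random variable $\omega\mapsto\rho_u(\sigma(\omega))(\omega)$, i.e. we plug the (random) value $\sigma(\omega)$ into the family $\{\rho_u(t)\}_{t=0}^T$ and evaluate at the same $\omega$. Equivalently, using the indicator decomposition over the finite value set,
\begin{equation}\notag
\rho_u(\sigma)=\sum_{t=0}^{T}\rho_u(t)\,1_{\{\sigma=t\}},\qquad \tau_u(\sigma)=\sum_{t=0}^{T}\tau_u(t)\,1_{\{\sigma=t\}}.
\end{equation}
This finite-sum representation is the workhorse: because $\T$ takes values in the finite set $\{0,\dots,T\}$, a countable/continuum pasting issue never arises, and I only need to verify the stopping-time property of each summand restricted to the event $\{\sigma=t\}$.

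\medskip

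The core step is then to verify the defining property of a stopping time, namely $\{\rho_u(\sigma)\le k\}\in\mathcal{F}_k$ for each $k\in\{0,\dots,T\}$. From the decomposition,
\begin{equation}\notag
\{\rho_u(\sigma)\le k\}=\bigcup_{t=0}^{T}\bigl(\{\sigma=t\}\cap\{\rho_u(t)\le k\}\bigr).
\end{equation}
I want each set in the union to lie in $\mathcal{F}_k$. For a fixed $t$, $\rho_u(t)\in\T_t$ is an $\mathbb{F}$-stopping time, so $\{\rho_u(t)\le k\}\in\mathcal{F}_k$; and $\{\sigma=t\}\in\mathcal{F}_t$ since $\sigma\in\T$. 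The delicate point — and the one I expect to be the main obstacle — is the interaction between the two events when $t>k$ versus $t\le k$. On $\{\sigma=t\}$ with $t>k$ we have $\sigma>k$; I must argue that on this event $\rho_u(\sigma)=\rho_u(t)\ge t>k$ cannot fall below $k$, using $\rho_u(t)\in\T_t$ (so $\rho_u(t)\ge t$). Hence for $t>k$ the contribution $\{\sigma=t\}\cap\{\rho_u(t)\le k\}$ is empty, and only the terms $t\le k$ survive; for those, $\{\sigma=t\}\in\mathcal{F}_t\subset\mathcal{F}_k$ and $\{\rho_u(t)\le k\}\in\mathcal{F}_k$, so the intersection is in $\mathcal{F}_k$ and the finite union is too. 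The identical argument with $\tau_u$ — recalling $\tau_u(t)\in\T_{t+1}$, so $\tau_u(t)\ge t+1$ and the surviving terms are $t\le k$ as well — gives $\{\tau_u(\sigma)\le k\}\in\mathcal{F}_k$. The only subtlety worth stating carefully is that the pointwise optimizers $\rho_u(t),\tau_u(t)$ must be fixed as honest random variables (a measurable selection, guaranteed by the classical optimal stopping theory cited in the excerpt) before the pasting is performed, so that the nested dependence on $\omega$ through both $\sigma(\omega)$ and the selection is jointly $\mathcal{F}$-measurable; once that is granted, the finite-union computation above closes the proof.
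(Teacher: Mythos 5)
Your proof is correct and is essentially the same as the paper's: the paper also decomposes $\{\rho_u(\sigma)\le t\}$ as $\cup_{i=0}^t(\{\sigma=i\}\cap\{\rho_u(i)\le t\})$, implicitly discarding the terms with $i>t$ (which are empty since $\rho_u(i)\ge i$), and concludes membership in $\mathcal{F}_t$ exactly as you do. Your write-up merely makes the empty-term observation and the analogous $\tau_u$ case explicit, which the paper leaves to the reader.
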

\begin{proof}
Take $\sigma\in\T$. Then for $t\in\{0,\dotso,T\}$
$$\{\rho_u(\sigma)\leq t\}=\cup_{i=0}^t(\{\sigma=i\}\cap\{\rho_u(i)\leq t\})\in\mathcal{F}_t.$$
\end{proof}
\begin{lemma}\label{l3}
$\Rho^*$ defined in \eqref{e5} is in $\bT^{ii}$ and $\tau^*$ defined in \eqref{e8} is a map from $\bT^{ii}$ to $\T$.
\end{lemma}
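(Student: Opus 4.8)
The plan is to verify the two assertions separately. To show $\Rho^*\in\bT^{ii}$ I must check (i) that $\Rho^*(\tau)\in\T$ for every $\tau\in\T$, and (ii) that $\Rho^*$ satisfies the Type II non-anticipativity condition \eqref{e1}; to show $\tau^*$ is a map from $\bT^{ii}$ to $\T$ I must check that $\tau^*(\Rho)\in\T$ for every $\Rho\in\bT^{ii}$. Throughout I will use two facts built into the optimizers, namely $\rho_u(\sigma)\geq\sigma$ (since $\rho_u(t)\in\T_t$) and $\tau_u(\sigma)\geq\sigma+1$ for $\sigma<T$ (since $\tau_u(t)\in\T_{t+1}$), together with \leref{l1}.

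For (i), fix $\tau\in\T$ and $t\in\{0,\dotso,T\}$ and write $\{\Rho^*(\tau)\leq t\}$ as the union of $\{\tau>\rho_d\}\cap\{\rho_d\leq t\}$ and $\{\tau\leq\rho_d\}\cap\{\rho_u(\tau)\leq t\}$. I then slice each piece along the value of the relevant stopping time: the first set equals $\cup_{i\leq t}(\{\rho_d=i\}\cap\{\tau>i\})\in\mathcal{F}_t$, and the second equals $\cup_{i\leq t}(\{\tau=i\}\cap\{\rho_d\geq i\}\cap\{\rho_u(i)\leq t\})\in\mathcal{F}_t$, using $\{\rho_u(i)\leq t\}\in\mathcal{F}_t$ as in \leref{l1}. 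Hence $\Rho^*(\tau)$ is a stopping time. For (ii), fix $\sigma_1,\sigma_2\in\T$ and argue pathwise, comparing $\sigma_1$ and $\sigma_2$ with $\rho_d$. On $\{\sigma_1>\rho_d,\ \sigma_2>\rho_d\}$ both values equal $\rho_d$ and $\rho_d<\sigma_1\wedge\sigma_2$, which is the first alternative of \eqref{e1}. On the remaining events one uses $\rho_u(\sigma_j)\geq\sigma_j$ to reach the second alternative: on $\{\sigma_1\leq\rho_d,\ \sigma_2\leq\rho_d\}$ both values dominate the respective $\sigma_j$, and on a mixed event such as $\{\sigma_1\leq\rho_d<\sigma_2\}$ one has $\sigma_1\wedge\sigma_2=\sigma_1$ with $\Rho^*(\sigma_1)=\rho_u(\sigma_1)\geq\sigma_1$ and $\Rho^*(\sigma_2)=\rho_d\geq\sigma_1$, so $\Rho^*(\sigma_1)\wedge\Rho^*(\sigma_2)\geq\sigma_1\wedge\sigma_2$. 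This exhausts the cases, so $\Rho^*\in\bT^{ii}$.

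For $\tau^*$, fix $\Rho\in\bT^{ii}$ and set $\pi:=\Rho(\tau_d)$, which lies in $\T$ because $\Rho$ maps $\T$ into $\T$ and $\tau_d\in\T$. Then $\tau^*(\Rho)=\tau_d 1_{\{\tau_d\leq\pi\}}+\tau_u(\pi)1_{\{\tau_d>\pi\}}$, and I verify $\{\tau^*(\Rho)\leq t\}\in\mathcal{F}_t$ by the same slicing: $\{\tau_d\leq\pi\}\cap\{\tau_d\leq t\}=\cup_{i\leq t}(\{\tau_d=i\}\cap\{\pi\geq i\})$ and $\{\tau_d>\pi\}\cap\{\tau_u(\pi)\leq t\}=\cup_{j\leq t}(\{\pi=j\}\cap\{\tau_d>j\}\cap\{\tau_u(j)\leq t\})$, both of which lie in $\mathcal{F}_t$.

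The routine-but-delicate part, and the only real obstacle, is the measurability bookkeeping: the raw events $\{\tau\leq\rho_d\}$ and $\{\tau_d>\pi\}$ are not themselves $\mathcal{F}_t$-measurable, so one cannot argue naively from the indicator representation. The resolution is always to decompose along the value of the inner stopping time and exploit that $\{\rho_d\geq i\}\in\mathcal{F}_{i-1}$ and $\{\tau_d>j\}\in\mathcal{F}_j$ while the tail conditions $\{\rho_u(i)\leq t\}$ and $\{\tau_u(j)\leq t\}$ lie in $\mathcal{F}_t$ by \leref{l1}. Once this is set up, the pathwise case analysis establishing \eqref{e1} is immediate given $\rho_u(\sigma)\geq\sigma$.
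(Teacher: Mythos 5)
Your proof is correct and takes essentially the same approach as the paper: the identical decomposition of $\{\Rho^*(\tau)\leq t\}$ into the two events, with your explicit slicing over the values of $\tau$ (resp. of $\Rho(\tau_d)$) simply unpacking what the paper gets from \leref{l1} and the inserted event $\{\tau\leq t\}$, followed by a pathwise case analysis for the Type II condition, and the same slicing for $\tau^*(\Rho)\in\T$, which the paper dismisses with ``similarly.'' The only cosmetic difference is that the paper organizes the non-anticipativity check by whether $\Rho^*(\tau_1)<\tau_1\wedge\tau_2$ or not, whereas you partition $\Omega$ according to the positions of $\sigma_1,\sigma_2$ relative to $\rho_d$; the two case analyses are equivalent.
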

\begin{proof}
Take $\tau\in\T$. We have that
\begin{eqnarray}
\notag \{\Rho^*(\tau)\leq t\}&=&(\{\tau>\rho_d\}\cap\{\rho_d\leq t\})\cup(\{\tau
\leq\rho_d\}\cap\{\rho_u(\tau)\leq t\})\\
\notag &=&(\{\tau>\rho_d\}\cap\{\rho_d\leq t\})\cup(\{\tau
\leq\rho_d\}\cap\{\tau\leq t\}\cap\{\rho_u(\tau)\leq t\})\in\mathcal{F}_t.
\end{eqnarray}
Hence $\Rho^*(\tau)\in\T$. Similarly we can show that $\tau^*(\Rho)\in\T$ for any $\Rho\in\bT^{ii}$.

It remains to show that $\Rho^*$ satisfies the non-anticipative condition of Type II in \eqref{e1}. Take $\tau_1,\tau_2\in\T$. If $\Rho^*(\tau_1)<\tau_1\wedge\tau_2\leq\tau_1$, then $\tau_1>\rho_d$ and thus $\Rho^*(\tau_1)=\rho_d<\tau_1\wedge\tau_2\leq\tau_2$, which implies $\Rho^*(\tau_2)=\rho_d=\Rho^*(\tau_1)<\tau_1\wedge\tau_2$. If $\Rho^*(\tau_1)\geq\tau_1\wedge\tau_2$, then if $\Rho^*(\tau_2)<\tau_1\wedge\tau_2$ we can use the previous argument to get that $\Rho^*(\tau_1)=\Rho^*(\tau_2)<\tau_1\wedge\tau_2$ which is a contradiction, and thus $\Rho^*(\tau_2)\geq\tau_1\wedge\tau_2$.
\end{proof}

\begin{lemma}
$$\overline V\leq \sup_{\tau\in\T}\E[U(\Rho^*(\tau),\tau)]\leq V.$$
\end{lemma}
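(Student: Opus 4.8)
The goal is the two-sided estimate $\overline V \le \sup_{\tau\in\T}\E[U(\Rho^*(\tau),\tau)] \le V$. The left inequality is immediate: since $\Rho^*\in\bT^{ii}$ by \leref{l3}, the quantity $\sup_{\tau\in\T}\E[U(\Rho^*(\tau),\tau)]$ is one of the competitors in the infimum defining $\overline V$, so $\overline V$ (the infimum over all $\Rho\in\bT^{ii}$) can only be smaller. All the real work is in the right inequality, so I would state the left bound in one sentence and then focus on showing $\sup_{\tau\in\T}\E[U(\Rho^*(\tau),\tau)]\le V$.

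The plan for the right inequality is to fix an arbitrary $\tau\in\T$ and bound $\E[U(\Rho^*(\tau),\tau)]$ by $V$ uniformly in $\tau$, using the explicit form of $\Rho^*$ in \eqref{e5}. I would split the expectation along the partition $\{\tau>\rho_d\}$ and $\{\tau\le\rho_d\}$. On $\{\tau\le\rho_d\}$ we have $\Rho^*(\tau)=\rho_u(\tau)$, and by the definition \eqref{e3} of $V^1$ as an essential infimum over $\T_t$ together with the optimality of $\rho_u(t)$, the conditional expectation $\E_\tau[U(\rho_u(\tau),\tau)]$ equals $V^1_\tau$; here one must be careful that $\rho_u(\tau)$ denotes the stopping time obtained by gluing the optimizers $\rho_u(t)$ along $\{\tau=t\}$, as in \leref{l1}. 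On $\{\tau>\rho_d\}$ we have $\Rho^*(\tau)=\rho_d$, and since $\tau\ge\rho_d+1$ there, I would use \eqref{e13}, namely that $V^2_{\rho_d}\ge\esssup_{\sigma\in\T_{\rho_d+1}}\E_{\rho_d}[U(\rho_d,\sigma)]\ge \E_{\rho_d}[U(\rho_d,\tau)]$ on that event, to get the pointwise conditional bound $\E_{\rho_d}[U(\rho_d,\tau)\mathbf 1_{\{\tau>\rho_d\}}]\le V^2_{\rho_d}\mathbf 1_{\{\tau>\rho_d\}}$. Combining the two pieces, I would obtain
\[
\E[U(\Rho^*(\tau),\tau)]\le \E\!\left[V^1_\tau\mathbf 1_{\{\tau\le\rho_d\}}+V^2_{\rho_d}\mathbf 1_{\{\tau>\rho_d\}}\right].
\]
The right-hand side is exactly the Dynkin-game payoff $\E[V^1_\tau\mathbf 1_{\{\tau\le\rho_d\}}+V^2_{\rho_d}\mathbf 1_{\{\tau>\rho_d\}}]$ evaluated at the saddle stopping time $\rho_d$, which by the saddle-point characterization in the displayed identity for $V$ is $\le V$ for every $\tau$. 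Taking the supremum over $\tau\in\T$ then yields $\sup_\tau\E[U(\Rho^*(\tau),\tau)]\le V$.

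The main obstacle I anticipate is the measurability and conditioning bookkeeping rather than any deep idea. Specifically, I must justify that the events $\{\tau\le\rho_d\}$ and $\{\tau>\rho_d\}$ are handled with the correct conditioning $\sigma$-algebra: on the first set I condition at time $\tau$ and invoke the optimality of $\rho_u(\tau)$ for $V^1_\tau$, whereas on the second set I condition at time $\rho_d$. One has to check that $\{\tau>\rho_d\}\in\mathcal F_{\rho_d}$ so that pulling $\mathbf 1_{\{\tau>\rho_d\}}$ inside $\E_{\rho_d}[\cdot]$ is legitimate, and that the inequality $V^2_{\rho_d}\ge\E_{\rho_d}[U(\rho_d,\tau)]$ on $\{\tau>\rho_d\}$ really follows from \eqref{e13}, using $\tau\in\T_{\rho_d+1}$ on that event. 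A subtle point is that $U(\rho_d,\tau)$ is $\mathcal F_{\rho_d\vee\tau}=\mathcal F_\tau$-measurable (since $\tau>\rho_d$), matching the form in $\esssup_{\sigma\in\T_{\rho_d+1}}\E_{\rho_d}[U(\rho_d,\sigma)]$. Once these conditioning steps are set up correctly the remaining manipulations are routine applications of the tower property and the already-established saddle-point property of the Dynkin game value $V$.
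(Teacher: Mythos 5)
Your proposal is correct and follows essentially the same route as the paper: the left inequality from $\Rho^*\in\bT^{ii}$ (\leref{l3}), then the decomposition along $\{\tau\leq\rho_d\}$ and $\{\tau>\rho_d\}$, conditioning at $\tau$ and $\rho_d$ respectively, bounding by $V^1_\tau$ (via optimality of $\rho_u(\tau)$) and $V^2_{\rho_d}$ (via \eqref{e13} and $\tau\geq\rho_d+1$ on that event), and finally invoking the saddle-point identity $V=\sup_{\tau\in\T}\E\bigl[V^1_\tau 1_{\{\tau\leq\rho_d\}}+V^2_{\rho_d}1_{\{\tau>\rho_d\}}\bigr]$. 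Your additional attention to the conditioning bookkeeping (e.g., $\{\tau>\rho_d\}\in\mathcal F_{\rho_d}$ and replacing $\tau$ by a stopping time in $\T_{\rho_d+1}$ agreeing with it on that event) is a point the paper passes over silently, but it is the same argument.
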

\begin{proof}
Recall $\Rho^*$ defined in \eqref{e5} and $\rho_d$ defined in \eqref{e6}. We have that
\begin{eqnarray}
\notag \overline V&\leq& \sup_{\tau\in\T}\E[U(\Rho^*(\tau),\tau)]\\
\notag &=& \sup_{\tau\in\T}\E\left[U(\rho_d,\tau) 1_{\{\rho_d<\tau\}}+U(\rho_u(\tau),\tau) 1_{\{\rho_d\geq\tau\}}\right]\\
\notag &=& \sup_{\tau\in\T}\E\left[1_{\{\rho_d<\tau\}}\E_{\rho_d}[U(\rho_d,\tau)] +1_{\{\rho_d\geq\tau\}}\E_\tau[U(\rho_u(\tau),\tau)] \right]\\
\notag&\leq&\sup_{\tau\in\T}\E\left[1_{\{\rho_d<\tau\}}V_{\rho_d}^2+1_{\{\rho_d
\geq\tau\}}V_\tau^1\right]\\
\notag&=&V.
\end{eqnarray}
\end{proof}

\begin{lemma}\label{l2}
$$\overline V\geq\inf_{\Rho\in\bT^{ii}}\E[U(\Rho(\tau^*(\Rho)),\tau^*(\Rho))]\geq V.$$
\end{lemma}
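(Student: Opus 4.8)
The plan is to establish the two inequalities in \leref{l2} by exhibiting, for the lower bound $\overline V\geq\inf_{\Rho}\E[U(\Rho(\tau^*(\Rho)),\tau^*(\Rho))]$, a single clever choice of inner stopping time, and for the harder bound $\inf_{\Rho}\E[U(\Rho(\tau^*(\Rho)),\tau^*(\Rho))]\geq V$, a conditioning argument on the Dynkin-game regions $\{\tau_d\leq\rho\}$ and $\{\tau_d>\rho\}$. For the first inequality, observe that $\tau^*$ is a legitimate map $\bT^{ii}\to\T$ by \leref{l3}, so for any fixed $\Rho\in\bT^{ii}$ the quantity $\E[U(\Rho(\tau^*(\Rho)),\tau^*(\Rho))]$ is the payoff when the inner player responds to $\Rho$ via $\tau^*(\Rho)\in\T$; since $\sup_{\tau\in\T}\E[U(\Rho(\tau),\tau)]$ dominates this particular response, taking $\inf_{\Rho}$ on both sides and recalling the definition of $\overline V$ in \eqref{e2} gives $\overline V\geq\inf_{\Rho\in\bT^{ii}}\E[U(\Rho(\tau^*(\Rho)),\tau^*(\Rho))]$ immediately.

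The substantive work is the second inequality. First I would fix an arbitrary $\Rho\in\bT^{ii}$ and write $\tau:=\tau^*(\Rho)$ as in \eqref{e8}, so that $\tau=\tau_d$ on $\{\tau_d\leq\Rho(\tau_d)\}$ and $\tau=\tau_u(\Rho(\tau_d))$ on $\{\tau_d>\Rho(\tau_d)\}$. The goal is to bound $\E[U(\Rho(\tau),\tau)]$ from below by $V=\inf_{\rho\in\T}\E[V_{\tau_d}^1 1_{\{\tau_d\leq\rho\}}+V_\rho^2 1_{\{\tau_d>\rho\}}]$, using the saddle-point characterization that $V=\E[V_{\tau_d}^1 1_{\{\tau_d\leq\rho\}}+V_\rho^2 1_{\{\tau_d>\rho\}}]$ holds with equality at $\rho=\rho_d$ and is a lower bound for every $\rho\in\T$. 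The natural candidate is $\rho:=\Rho(\tau_d)$, which lies in $\T$ since $\Rho$ maps into $\T$. I would split into the two events above and estimate the payoff on each. On $\{\tau_d\leq\Rho(\tau_d)\}=\{\tau_d\leq\rho\}$ we have $\tau=\tau_d$, so $U(\Rho(\tau),\tau)=U(\Rho(\tau_d),\tau_d)$ with $\tau_d\leq\Rho(\tau_d)$; taking $\E_{\tau_d}[\cdot]$ and using the definition \eqref{e3} of $V_{\tau_d}^1=\essinf_{\rho'\in\T_{\tau_d}}\E_{\tau_d}[U(\rho',\tau_d)]$ together with $\Rho(\tau_d)\in\T_{\tau_d}$ on this event bounds the contribution below by $V_{\tau_d}^1 1_{\{\tau_d\leq\rho\}}$. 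On $\{\tau_d>\Rho(\tau_d)\}=\{\tau_d>\rho\}$ we have $\Rho(\tau)=\Rho(\tau_u(\Rho(\tau_d)))$; here the non-anticipativity of $\bT^{ii}$ must be invoked to identify $\Rho(\tau)$ with $\Rho(\tau_d)=\rho$ so that $U(\Rho(\tau),\tau)=U(\rho,\tau_u(\rho))$ with $\tau_u(\rho)\in\T_{\rho+1}$, and then the definition \eqref{e13} of $V_\rho^2\geq\E_\rho[U(\rho,\tau_u(\rho))]$ via the optimizer $\tau_u(\rho)$ bounds this contribution below by $V_\rho^2 1_{\{\tau_d>\rho\}}$.

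The main obstacle is precisely the identity $\Rho(\tau_u(\Rho(\tau_d)))=\Rho(\tau_d)$ needed on the event $\{\tau_d>\Rho(\tau_d)\}$, and verifying that $\{\tau_d\leq\Rho(\tau_d)\}$ coincides (up to null sets) with $\{\tau_d\leq\rho\}$ for $\rho=\Rho(\tau_d)$, which is tautological, but the measurability and the event-splitting must be handled carefully. The key is to apply the Type II condition \eqref{e1} with $\sigma_1=\tau_u(\Rho(\tau_d))$ and $\sigma_2=\tau_d$: on the set where $\Rho(\tau_d)<\tau_d$, since $\tau_u(\Rho(\tau_d))\geq\Rho(\tau_d)+1$ whenever $\Rho(\tau_d)<T$ we get $\Rho(\tau_d)<\tau_d\wedge\tau_u(\Rho(\tau_d))$, so the first alternative of \eqref{e1} forces $\Rho(\sigma_1)=\Rho(\sigma_2)$, i.e. $\Rho(\tau_u(\Rho(\tau_d)))=\Rho(\tau_d)$, exactly as required. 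Once these two set identifications and the conditional bounds are in place, summing the two contributions yields $\E[U(\Rho(\tau),\tau)]\geq\E[V_{\tau_d}^1 1_{\{\tau_d\leq\rho\}}+V_\rho^2 1_{\{\tau_d>\rho\}}]\geq V$, where the last step uses that $\tau_d$ is the maximizer's optimal response in the Dynkin game so that the bracketed expectation dominates $V$ for every choice of $\rho\in\T$, in particular for $\rho=\Rho(\tau_d)$. Taking $\inf_{\Rho\in\bT^{ii}}$ completes the proof.
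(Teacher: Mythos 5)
Your proof follows the same skeleton as the paper's (take $\rho=\Rho(\tau_d)$, split on $\{\tau_d\leq\Rho(\tau_d)\}$ and $\{\tau_d>\Rho(\tau_d)\}$, bound the two pieces by $V^1_{\tau_d}$ and $V^2_{\Rho(\tau_d)}$, and finish with the saddle-point property of $\tau_d$), and your first inequality is fine. But on $\{\tau_d>\Rho(\tau_d)\}$ your key estimate is stated in the wrong direction and, as stated, unusable. By \eqref{e13}, $V^2_\rho$ is the \emph{maximum} of $\esssup_{\tau\in\T_{\rho+1}}\E_\rho[U(\rho,\tau)]$ and $V^1_\rho$, and $\tau_u(\rho)$ optimizes only the first term; so all the definition gives is $\E_\rho[U(\rho,\tau_u(\rho))]=\esssup_{\tau\in\T_{\rho+1}}\E_\rho[U(\rho,\tau)]\leq V^2_\rho$, which cannot produce the lower bound $\geq V^2_\rho 1_{\{\tau_d>\rho\}}$ you claim. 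You need \emph{equality} on this event, i.e.\ that the maximum in \eqref{e13} is attained by the $\esssup$ term, and that requires an argument you omit --- precisely the step the paper inserts: since $\rho=\Rho(\tau_d)<\tau_d$ and $\tau_d$ is by \eqref{e6} the \emph{first} time $V_s=V^1_s$, one has $V^1_\rho<V_\rho\leq V^2_\rho$ on $\{\tau_d>\Rho(\tau_d)\}$, hence $V^2_\rho=\esssup_{\tau\in\T_{\rho+1}}\E_\rho[U(\rho,\tau)]=\E_\rho[U(\rho,\tau_u(\rho))]$ there. Without this, the chain breaks: if $V^2_\rho$ were equal to $V^1_\rho$ and strictly larger than the $\esssup$ term, your inequality would be false.

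There is a second, recurring flaw: both of your event-wise identifications rest on the principle that if two stopping times agree on an event, then their $\Rho$-images agree on that event (you use it to write $\Rho(\tau^*(\Rho))=\Rho(\tau_d)$ on $\{\tau_d\leq\Rho(\tau_d)\}$, and $\Rho(\tau^*(\Rho))=\Rho(\tau_u(\Rho(\tau_d)))$ on $\{\tau_d>\Rho(\tau_d)\}$ before invoking \eqref{e1}). That principle is \emph{not} part of the definition of $\bT^{ii}$ and is false in general: e.g.\ with $T=1$ and a nontrivial $A\in\mathcal{F}_0$, the strategy $\Rho(1_{A^c})=1_{A^c}$, $\Rho(0)=\Rho(1)=\Rho(1_A)\equiv 1$ is Type II, yet $1_{A^c}$ and $0$ agree on $A$ while their images differ there. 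The correct move --- the paper's --- is to apply the dichotomy \eqref{e1} directly to the pair $(\tau^*(\Rho),\tau_d)$: on $\{\tau_d>\Rho(\tau_d)\}$ one has $\tau^*(\Rho)=\tau_u(\Rho(\tau_d))\geq\Rho(\tau_d)+1$, so $\Rho(\tau_d)<\tau_d\wedge\tau^*(\Rho)$, which rules out the second alternative and forces $\Rho(\tau^*(\Rho))=\Rho(\tau_d)$; while on $\{\tau_d\leq\Rho(\tau_d)\}$ the dichotomy yields only (and you only need) $\Rho(\tau^*(\Rho))\geq\tau_d\wedge\tau^*(\Rho)=\tau_d$, which is exactly what makes the bound by $V^1_{\tau_d}=\essinf_{\rho'\in\T_{\tau_d}}\E_{\tau_d}[U(\rho',\tau_d)]$ go through. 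Your application of \eqref{e1} to the pair $(\tau_u(\Rho(\tau_d)),\tau_d)$ identifies the images of the wrong pair, leaving the payoff term $\Rho(\tau^*(\Rho))$ uncontrolled. So the architecture of your argument is right, but these two steps --- one a missing key lemma, one a false identification principle --- are genuine gaps.
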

\begin{proof}
Take $\Rho\in\bT^{ii}$. Recall $\tau^*$ defined in \eqref{e8}. By the non-anticipativity condition of Type II in \eqref{e1}, 
$$\text{either}\quad\Rho(\tau^*(\Rho))=\Rho(\tau_d)<\tau_d\wedge\tau^*(\Rho)\quad\text{or}\quad\Rho(\tau^*(\Rho))\wedge\Rho(\tau_d)\geq\tau_d\wedge\tau^*(\Rho).$$
Therefore, 
$$\text{if}\quad\Rho(\tau_d)\geq\tau_d,\quad{then}\quad\Rho(\tau^*(\Rho))\geq\tau_d\wedge\tau^*(\Rho)=\tau_d=\tau^*(\Rho),$$
and
\begin{eqnarray}
\text{if}\quad\Rho(\tau_d)<\tau_d,\quad{then}\quad\tau^*(\Rho)=\tau_u(\Rho(\tau_d))>\Rho(\tau_d)&\implies&\Rho(\tau_d)<\tau^*(\Rho)\wedge\tau_d\label{e15}\\
&\implies&\Rho(\tau_d)=\Rho(\tau^*(\Rho)),\notag
\end{eqnarray}
where in \eqref{e15} we used the fact that $\tau_u(t)\geq t+1$ if $t<T$ (in the first conclusion).

Besides, if $\tau_d>\Rho(\tau_d)$, then by the fact that $V^1\leq V$ and \eqref{e6} we have that
$$V_{\Rho(\tau_d)}^1<V_{\Rho(\tau_d)}\leq V_{\Rho(\tau_d)}^2,$$
which implies that
$$V_{\Rho(\tau_d)}^2=\esssup_{\tau\in\T_{\Rho(\tau_d)+1}}\E_{\Rho(\tau_d)}[U(\Rho(\tau_d),\tau)]=\E_{\Rho(\tau_d)}[U(\Rho(\tau_d),\tau_u(\Rho(\tau_d)))].$$

Now we have that
\begin{eqnarray}
\notag\sup_{\tau\in\T}\E[U(\Rho(\tau),\tau))]&\geq&\E[U(\Rho(\tau^*(\Rho)),\tau^*(\Rho))]\\
\notag&=&\E\left[U(\Rho(\tau^*(\Rho)),\tau^*(\Rho)) 1_{\{\tau_d\leq\Rho(\tau_d)\}}+U(\Rho(\tau^*(\Rho)),\tau^*(\Rho)) 1_{\{\tau_d>\Rho(\tau_d)\}}\right]\\
\notag&=&\E\left[U(\Rho(\tau^*(\Rho)),\tau_d) 1_{\{\tau_d
\leq\Rho(\tau_d)\}}+U(\Rho(\tau_d),\tau_u(\Rho(\tau_d))) 1_{\{\tau_d>\Rho(\tau_d)\}}\right]\\
\notag&=&\E\left[1_{\{\tau_d
\leq\Rho(\tau_d)\}}\E_{\tau_d}[U(\Rho(\tau^*(\Rho)),\tau_d)] +1_{\{\tau_d>\Rho(\tau_d)\}}\E_{\Rho(\tau_d)}[U(\Rho(\tau_d),\tau_u(\Rho(\tau_d)))]\right]\\
\notag&\geq&\E\left[1_{\{\tau_d
\leq\Rho(\tau_d)\}}V_{\tau_d}^1+1_{\{\tau_d>\Rho(\tau_d)\}}V_{\Rho(\tau_d)}^2\right]\\
\notag&\geq&\inf_{\rho\in\T}\E\left[1_{\{\tau_d
\leq\rho\}}V_{\tau_d}^1+1_{\{\tau_d>\rho\}}V_\rho^2\right]\\
\notag&=&V,
\end{eqnarray}
where the fifth inequality follows from the definition of $V^1$ in \eqref{e3} and the fact that $\Rho(\tau^*(\Rho))\geq\tau_d$ on $\{\Rho(\tau_d)\geq\tau_d\}$. As this holds for arbitrary $\Rho\in\bT^{ii}$, the conclusion follows.
\end{proof}

\begin{proof}[\textbf{Proof of \thref{t1}}]
This follows from Lemmas \ref{l1}-\ref{l2}.
\end{proof}

\section{Some insight into the continuous-time version}
We can also consider the continuous time version of the stopper-stopper problem. If we want to follow the argument in Section 4, there are mainly two technical parts we need to handle as opposed to the discrete-time case, which are as follows.
\begin{itemize}
\item We need to make sure that $V^1$ and $V^2$ defined in \eqref{e3} and \eqref{e13} have RCLL modifications.
\item On an intuitive level, the optimizers (or choose to be $\eps$-optimizers in continuous time) $\rho_u(\cdot)$ and $\tau_u(\cdot)$ are maps from $\T$ to $\T$. Yet this may not be easy to prove in continuous time, as opposed to the argument in \leref{l3}.
\end{itemize}
In order to address the two points above, we may have to assume some continuity of $U$ in $(s,t)$ (maybe also in $\omega$). On the other hand, with such continuity, there will essentially be no difference between using stopping strategies of Type I and using stopping strategies of Type II, as opposed to the discrete-time case (see Examples \ref{g1} and \ref{g2}).

\bibliographystyle{siam}
\bibliography{ref}

\end{document}